\newtheorem{theorem}{Theorem}
\begin{document}
\author{Alexander E Patkowski}
\title{A Strange Partition Theorem Related to the Second \newline Atkin-Garvan Moment}
\date{\vspace{-5ex}}
\maketitle
\abstract{This paper contain results on a strange smallest parts function related to the second Atkin-Garvan moment. Some
new identities are discovered in relation to Andrews $spt$ function as well as one of Borweins' two-dimensional theta functions.}

\section{Introduction}
The generating function for the sum of smallest parts among partitions of $n$ into an odd number of distinct parts minus the sum of the smallest parts among the partitions of $n$ into an even number of distinct parts is
$$\sum_{n\ge0}(1-(q^{n+1})_{\infty}),$$ where we use standard notation [8] $(a)_n=(a;q)_{n}:=\prod_{0\le k\le n-1}(1-aq^{k}),$ $q\in\mathbb{C}.$ In fact, it is known that (see [2])
\begin{equation}\sum_{n\ge0}(1-(q^{n+1})_{\infty})=\sum_{n\ge1}\sigma_{0}(n)q^n,\end{equation}
where $\sigma_{k}(n)=\sum_{d|n}d^k,$ $n\in\mathbb{N},$ and $\sigma_{k}(n)=0$ if $n\notin\mathbb{N}.$ (See [11] for similar identities.)
\par Andrews' [2] discovered the generating function for $spt(n),$ the total number of appearances of the smallest part in unrestricted partitions of $n,$
and then related it to the second Atkin-Garvan Moment. Namely, he proved [2]
\begin{equation}\sum_{n\ge1}spt(n)q^n=\sum_{n\ge1}\frac{q^n}{(1-q^n)(q^n)_{\infty}}=\sum_{n\ge1}np(n)q^n-\frac{1}{2}\sum_{n\ge1}N_2(n)q^n,\end{equation}
where $p(n)$ is the number of unrestricted partitions of $n,$ $N_2(n)=\sum_{m\in\mathbb{Z}}m^2N(m,n)$ is the second Atkin-Garvan moment, and $N(m,n)$ is the number of partitions of $n$ with rank $m.$ The ``rank" of a partition is the largest part minus the number of parts. The implication of relating $spt(n)$ to the right side of equation (2) is that $spt(n)$ satisfies some interesting Ramanujan-type congruences [1, 2].
\par The purpose of this paper is to offer a strange result similar to (2), and more than the contents of my preprint referenced in Garvan's paper [8]. \begin{theorem} We have,
$$\sum_{n\ge1}\frac{q^n}{(1-q^n)(1-q^n)(1-q^{n+1})\cdots(1-q^{2n-1})(q^{3n};q^3)_{\infty}}$$
\begin{equation}=\frac{1}{(q^3;q^3)_{\infty}}\sum_{n\ge1}\frac{nq^n}{1-q^n}-\frac{1}{2}\sum_{n\ge1}N_2(n)q^{3n}.\end{equation}

\end{theorem}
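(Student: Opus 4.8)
The plan is to strip the $N_2$--term from (3) using Andrews' identity (2) at $q^{3}$, and then to prove the residual $q$--series identity by reading the left side of (3) as a ``strange'' smallest--parts generating function. Throughout write $(q^n;q)_n=(1-q^n)(1-q^{n+1})\cdots(1-q^{2n-1})$ and let $G$ denote the left side of (3). Replacing $q$ by $q^{3}$ in (2) gives $\sum_{n\ge1}spt(n)q^{3n}=\sum_{n\ge1}np(n)q^{3n}-\tfrac12\sum_{n\ge1}N_2(n)q^{3n}$, and logarithmic differentiation of $1/(q)_\infty$ (the classical $\sum_{n\ge1}np(n)q^{n}=\tfrac1{(q)_\infty}\sum_{n\ge1}\tfrac{nq^{n}}{1-q^{n}}$), read at $q^{3}$, gives $\sum_{n\ge1}np(n)q^{3n}=\tfrac1{(q^3;q^3)_\infty}\sum_{n\ge1}\tfrac{nq^{3n}}{1-q^{3n}}$. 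Substituting both into the right side of (3) and using $\tfrac{nq^{n}}{1-q^{n}}-\tfrac{nq^{3n}}{1-q^{3n}}=\tfrac{nq^{n}(1+q^{n})}{1-q^{3n}}$, the $N_2$--contributions cancel and (3) becomes equivalent to
\begin{equation}
G-\sum_{n\ge1}spt(n)q^{3n}=\frac{1}{(q^3;q^3)_\infty}\sum_{n\ge1}\frac{nq^{n}(1+q^{n})}{1-q^{3n}}. \tag{$\star$}
\end{equation}

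Clearing the factor $1/(q^3;q^3)_\infty$ is convenient: using $(q^3;q^3)_\infty/(q^{3n};q^3)_\infty=(q^3;q^3)_{n-1}$, and $(q)_\infty\sum_{n\ge1}spt(n)q^{n}=\sum_{n\ge1}\tfrac{q^{n}(q)_{n-1}}{1-q^{n}}$ (identity (2) after multiplying by $(q)_\infty$) read at $q^{3}$, multiplying $(\star)$ by $(q^3;q^3)_\infty$ shows it is equivalent to the Lambert--type identity
\begin{equation}
\sum_{n\ge1}\frac{q^{n}(q^3;q^3)_{n-1}}{(1-q^{n})(q^{n};q)_n}-\sum_{n\ge1}\frac{q^{3n}(q^3;q^3)_{n-1}}{1-q^{3n}}=\sum_{n\ge1}\frac{nq^{n}}{1-q^{n}}-\sum_{n\ge1}\frac{nq^{3n}}{1-q^{3n}}. \tag{$\dagger$}
\end{equation}
Equivalently, since $\sum_{n\ge1}\tfrac{nq^{n}(1+q^{n})}{1-q^{3n}}=-\tfrac13\,q\,\tfrac{d}{dq}\log b(q)$ with $b(q)=(q)_\infty^{3}/(q^3;q^3)_\infty$ Borwein's cubic theta function, $(\star)$ can be recast as $3(q)_\infty^{3}\big(G-\sum_{n\ge1}spt(n)q^{3n}\big)=-q\,b'(q)$; this is presumably the point at which Borwein's two--dimensional theta function enters the paper.

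To prove $(\dagger)$ I would use the combinatorial model of $G$: expanding $\tfrac{q^{n}}{(1-q^{n})^{2}}=\sum_{m\ge1}mq^{nm}$ shows $G$ is the generating function for a strange smallest--parts statistic, namely each partition counted with weight equal to the multiplicity $m$ of its smallest part $n$, where every part larger than $n$ is required to lie either in $\{n+1,\dots,2n-1\}$ or among the multiples of $3$ that are $\ge 3n$. The subtracted sum $\sum spt(n)q^{3n}$ is the same statistic restricted to partitions into multiples of $3$, and the right side of $(\star)$ counts, together with an auxiliary partition into multiples of $3$ accounting for $1/(q^3;q^3)_\infty$, triples $(n,j,i)$ with $n,j\ge1$, $3\nmid j$, $1\le i\le n$, contributing $q^{nj}$. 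I would then establish $(\dagger)$ either by a weight--preserving bijection among these families, or by expanding the finite product $(q^3;q^3)_{n-1}/(q^{n};q)_n$ via the terminating $q$-binomial theorem and reversing the order of the resulting double sum, so that the $spt$--at--$q^{3}$ piece detaches and the remainder telescopes to the Lambert series on the right.

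The reductions to $(\star)$ and $(\dagger)$ are routine, and the whole weight of the theorem rests on $(\dagger)$. The obstacle is a clash of scales built into the left side: the finite base--$q$ block $(q^{n};q)_n$ (parts in $[n,2n-1]$) must be reconciled with the finite base--$q^{3}$ block $(q^3;q^3)_{n-1}$, and a short coefficient computation confirms $(\dagger)$ while showing that neither sum on its left side is individually a Lambert series, so the required cancellations only take effect after summing over all $n$. Making the bijection, or the $q$-binomial rearrangement, carry this base--$q$/base--$q^{3}$ interaction through is the crux; granting $(\dagger)$, the theorem follows by undoing the reduction of the first paragraph.
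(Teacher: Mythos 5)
Your reductions are correct as far as they go: using Andrews' identity (2) at $q^3$ to trade the $N_2$-term for $\sum spt(n)q^{3n}-\sum np(n)q^{3n}$, and then clearing $1/(q^3;q^3)_\infty$, does show that Theorem 1 is equivalent (given (2)) to your identity $(\dagger)$, and your side remarks (the Lambert-series simplification, the logarithmic-derivative identity) check out. But this is only a restatement: $(\dagger)$ carries the entire content of the theorem, and you never prove it. You offer three possibilities --- a weight-preserving bijection, an expansion of $(q^3;q^3)_{n-1}/(q^n;q)_n$ by a ``terminating $q$-binomial theorem'' followed by a telescoping rearrangement, and ``a short coefficient computation'' --- none of which is carried out, and the second is not even well posed, since the mixed-base ratio $(q^3;q^3)_{n-1}/(q^n;q)_n$ is not of the form the $q$-binomial theorem expands; verifying finitely many coefficients is of course not a proof. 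You yourself identify the base-$q$ versus base-$q^3$ clash as the crux and leave it unresolved, so the proposal has a genuine gap exactly where the work is.

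The paper closes precisely this gap with Bailey machinery, which your outline never invokes. One differentiates Bailey's lemma (5) with respect to $z$ and then $y$, sets $z=y=1$ and $a=1$ to obtain identity (9), namely $\sum_{n\ge1}(q)_{n-1}^2\beta_n q^n=\alpha_0\sum_{n\ge1}nq^n/(1-q^n)+\sum_{n\ge1}\alpha_n q^n/(1-q^n)^2$, and then inserts Slater's pair J(1), whose $\beta_n=(q^3;q^3)_{n-1}/((q)_n(q)_{2n-1})$ produces your left-hand series and whose $\alpha_n$ is supported on $n\equiv0\pmod 3$ --- that support condition is exactly what reconciles the base-$q$ sum with the base-$q^3$ Lambert/rank series, i.e.\ the ``clash of scales'' you could not handle. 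The resulting $\alpha$-sum is identified with $-\frac12\sum_{n\ge1}N_2(n)q^{3n}$ via Andrews' formula [2, eq.\ (3.4)]; multiplying by $1/(q^3;q^3)_\infty$ gives (3). (This is the same device Andrews used to prove (2) itself, with the unit Bailey pair.) To repair your argument you would either have to supply an independent proof of $(\dagger)$ --- which in effect means rediscovering identity (9) for this Bailey pair --- or simply argue from (9) directly as the paper does.
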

To prove this theorem, we require the machinery of Bailey pairs and Bailey's lemma. Recall [4] that we define a pair of sequences $(\alpha_n,\beta_n)$ to be a Bailey pair with respect to $a$ if 
		\begin{equation}\beta_n=\sum_{r\ge0}^{n} \frac{\alpha_r}{(aq;q)_{n+r} (q;q)_{n-r}}.\end{equation}
The following is Bailey's lemma, which utilizes this definition of ($\alpha_n$, $\beta_n$) to produce new $q$-series identities. \newline
{\bf Lemma 1.2} \it If $(\alpha_n,\beta_n)$ form a Bailey pair with respect to $a$ then
\begin{equation}\sum_{n\ge0}^{\infty}(z)_n(y)_n(aq/zy)^n\beta_n=\frac{(aq/z)_{\infty}(aq/y)_{\infty}}{(aq)_{\infty}(aq/zy)_{\infty}}\sum_{n\ge0}^{\infty}\frac{(z)_n(y)_n(aq/zy)^n\alpha_n}{(aq/z)_n(aq/y)_n}.\end{equation} \rm
From Slater's list, we have the following Bailey pair $(\alpha_n,\beta_n)$ relative to $a=1$ [12, J(1)]
\begin {equation} \alpha_{3n\pm1}=0,\end{equation}
\begin {equation} \alpha_{3n}=(-1)^nq^{3n(3n-1)/2}(1+q^{3n}),\end{equation}
\begin{equation} \beta_n=\frac{(q^3;q^3)_{n-1}}{(q)_n(q)_{2n-1}}.\end{equation}
\rm
Differentiating (5) with respect to $z$ and setting $z=1,$ and then doing the same for $y$ yields the identity (after setting $a=1$),
\begin{equation}\sum_{n\ge1}(q;q)_{n-1}^2\beta_nq^n=\alpha_0\sum_{n\ge1}\frac{nq^n}{1-q^n}+\sum_{n\ge1}\frac{\alpha_nq^n}{(1-q^n)^2}.\end{equation}
Inserting the Bailey pair (6)-(8) into equation (9) and then multiplying through by $(q^3;q^3)_{\infty}^{-1}$ gives us Theorem 1. To see that
$$-\frac{1}{2}\sum_{n\ge1}N_2(n)q^{3n}=\sum_{n\ge1}\frac{\alpha_nq^n}{(1-q^n)^2},$$
note [2, eq.(3.4)]
$$-\frac{1}{2}\sum_{n\ge1}N_2(n)q^{n}=\sum_{n\ge1}\frac{(-1)^nq^{n(3n+1)/2}(1+q^n)}{(1-q^{n})^2}.$$
\par
The generating function on the left hand side of (3) is $spt_{2,3}(n),$ the total number of appearances of the smallest part in each integer partition of $n,$ where 
parts are $<$ twice the smallest or multiples of three $\ge$ thrice the smallest.
\section{A Relation to the Borwein theta function $a(q)$}
One of the theta functions introduced by the Borwein's [5, Chapter 4], [6] is the function
\begin{equation}a(q):=\sum_{n,m\in\mathbb{Z}}q^{n^2+nm+m^2}.\end{equation}
The function $a(q)$ has been studied in considerable detail, and has several direct relations to Jacobi's theta functions (see [6]). The Lambert series expansion for $a(q)$ is due to Lorenz, and can be found in [6, eq.(2.21)]
$$a(q)=1+6\sum_{n\ge1}\left(\frac{q^{3n+1}}{1-q^{3n+1}}-\frac{q^{3n+2}}{1-q^{3n+2}}\right).$$
In [3, pg. 460, Entry 3(i)], we find
\begin{equation}\sum_{n\ge1}\frac{nq^{n}}{1-q^n}-3\sum_{n\ge1}\frac{nq^{3n}}{1-q^{3n}}=\frac{a^2(q)-1}{12}.\end{equation}
We are now ready to prove the following result.
\begin{theorem} If $n\equiv\pm1\pmod{3},$ and 
$$\sum_{m\ge1}\xi(m)q^m:=\frac{1}{(q^3;q^3)_{\infty}}\left(\frac{a^2(q)-1}{12}\right),$$
then $spt_{2,3}(n)=\xi(n).$ Further, if $n\equiv0\pmod{3},$ then
$$spt_{2,3}(n)=3spt(n/3)+\xi(n)+N_2(n/3).$$

\end{theorem}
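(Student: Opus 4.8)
The plan is to combine the generating-function identity of Theorem 1 with Ramanujan's Entry~3(i) (equation~(12)) and Andrews' $spt$-formula (equation~(2)), and then to read off coefficients according to the residue of $n$ modulo $3$. By Theorem 1 and the stated combinatorial interpretation of its left-hand side,
$$\sum_{n\ge1}spt_{2,3}(n)q^n=\frac{1}{(q^3;q^3)_{\infty}}\sum_{n\ge1}\frac{nq^n}{1-q^n}-\frac{1}{2}\sum_{n\ge1}N_2(n)q^{3n}.$$
First I would rewrite (12) as $\sum_{n\ge1}\frac{nq^n}{1-q^n}=\frac{a^2(q)-1}{12}+3\sum_{n\ge1}\frac{nq^{3n}}{1-q^{3n}}$, divide by $(q^3;q^3)_{\infty}$, and identify the first resulting term with $\sum_{m\ge1}\xi(m)q^m$ by the definition in the statement. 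This yields
$$\sum_{n\ge1}spt_{2,3}(n)q^n=\sum_{m\ge1}\xi(m)q^m+\frac{3}{(q^3;q^3)_{\infty}}\sum_{n\ge1}\frac{nq^{3n}}{1-q^{3n}}-\frac{1}{2}\sum_{n\ge1}N_2(n)q^{3n}.$$

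Next I would handle the middle term by returning to Andrews' identity. Logarithmic differentiation of $\sum_{n\ge0}p(n)q^n=1/(q;q)_{\infty}$ gives $\sum_{n\ge1}np(n)q^n=\frac{1}{(q;q)_{\infty}}\sum_{n\ge1}\frac{nq^n}{1-q^n}$, so (2) may be written $\sum_{n\ge1}spt(n)q^n=\frac{1}{(q;q)_{\infty}}\sum_{n\ge1}\frac{nq^n}{1-q^n}-\frac{1}{2}\sum_{n\ge1}N_2(n)q^n$. Replacing $q$ by $q^3$ and solving for the Lambert sum gives
$$\frac{3}{(q^3;q^3)_{\infty}}\sum_{n\ge1}\frac{nq^{3n}}{1-q^{3n}}=3\sum_{n\ge1}spt(n)q^{3n}+\frac{3}{2}\sum_{n\ge1}N_2(n)q^{3n}.$$
Substituting this into the preceding display and combining the two $N_2$-series via $\frac{3}{2}-\frac{1}{2}=1$ leaves
$$\sum_{n\ge1}spt_{2,3}(n)q^n=\sum_{m\ge1}\xi(m)q^m+3\sum_{n\ge1}spt(n)q^{3n}+\sum_{n\ge1}N_2(n)q^{3n}.$$

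Finally I would equate coefficients. For $n\equiv\pm1\pmod{3}$ the only series on the right supported at $q^n$ is $\sum_{m\ge1}\xi(m)q^m$, so $spt_{2,3}(n)=\xi(n)$; for $n=3k$ the coefficient of $q^{3k}$ gives $spt_{2,3}(3k)=\xi(3k)+3spt(k)+N_2(k)$, that is $spt_{2,3}(n)=3spt(n/3)+\xi(n)+N_2(n/3)$. I expect no serious obstacle here: the one point requiring attention is the bookkeeping of the two $N_2$-series, where the factor $3$ produced by (12) multiplies the $-\frac{1}{2}\sum_{n\ge1}N_2(n)q^{3n}$ coming from Andrews' formula after $q\mapsto q^3$, and it is precisely the resulting $\frac{3}{2}-\frac{1}{2}$ that turns a half-integer coefficient into the integer $N_2(n/3)$; one should also note explicitly that every series on the right other than $\sum_{m\ge1}\xi(m)q^m$ is supported on multiples of $3$, which is what makes the case $n\not\equiv0\pmod{3}$ immediate.
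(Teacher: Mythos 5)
Your proposal is correct and follows essentially the same route as the paper: it derives the key identity $\sum_{n\ge1}spt_{2,3}(n)q^n-3\sum_{n\ge1}spt(n)q^{3n}=\sum_{m\ge1}\xi(m)q^m+\sum_{n\ge1}N_2(n)q^{3n}$ by combining Theorem 1 with Entry 3(i) and with Andrews' identity (2) under $q\mapsto q^3$, and then reads off coefficients according to $n\bmod 3$. Your write-up is just a more explicit version of the paper's argument (spelling out Euler's $\sum np(n)q^n=(q;q)_{\infty}^{-1}\sum nq^n/(1-q^n)$ and the $\tfrac{3}{2}-\tfrac{1}{2}$ bookkeeping), so no further changes are needed.
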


The key identity to prove this result is,
$$\sum_{n\ge1}spt_{2,3}(n)q^n-3\sum_{n\ge1}spt(n)q^{3n}$$
\begin{equation}=\frac{1}{(q^3;q^3)_{\infty}}\left(\frac{a^2(q)-1}{12}\right)-\frac{2}{(q^3;q^3)_{\infty}}\sum_{n\ge1}\frac{(-1)^n q^{3n(3n+1)/2}(1+q^{3n})}{(1-q^{3n})^2}.\end{equation}
Equation (12) is three multiplied by equation (2) (with $q$ replaced by $q^3$) subtracted from Theorem 2. Note the last series on the right side of (12) is 
$\sum_{n\ge1}N_2(n)q^{3n}$ [2. eq.(3.4)]. The first part of Theorem 2 is obtained from taking the coefficient of $q^{3n\pm1}$ of equation (12), and noting that the generating functions over $q^{3n}$ get omitted when doing this. The second part follows from taking the coefficient of $q^{3n}$ on both sides. 
\section{Another look at Theorem 2}
In a paper by Huard, Ou, Spearman, and Williams [10] on convolution sums involving divisor functions, we find the following nice theorem:
 \newline
{\bf [10, Theorem 13] \bf} \it The number of representations of a positive integer $n$ by the quaternary form $x^2 + xy + y^2 + u^2 + uv + v^2$ is $12\sigma(n)-36\sigma(n/3).$
\\*
\\*
\rm
This theorem is attributed in [10] to G. A. Lomadze. It is not difficult to see that [10, Theorem 13] is essentially equivalent to equation (11) upon taking the coefficient of $q^n.$ Naturally, from this observation we can restate Theorem 2 
using the cardinality of a quaternary form. \par Following their notation [10], put 
$$R(k)=\mathbf{card}\{(x, y, u, v)\in\mathbb{Z}^4 \hspace{1mm} | \hspace{1mm} k=x^2 + xy + y^2 + u^2 + uv + v^2\}.$$

Using the above result we can now prove the following.
\begin{theorem} Let $p_3(n)$ be the number of partitions of $n$ with parts congruent to $0\pmod{3}.$ Put 
$$P_3(n)=\sum_{k}R(k)p_3(n-k),$$
then $spt_{2,3}(3n)\equiv \frac{1}{12}P_3(3n)-\frac{1}{2}N_2(n)\pmod{3}.$ Further, if $n\equiv\pm1\pmod{3}$
then $spt_{2,3}(n)=\frac{1}{12}P_3(n).$ \end{theorem}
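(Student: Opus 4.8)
The plan is to obtain Theorem 3 directly from Theorem 2, using only two elementary facts: a rearrangement of generating functions identifying $\xi$ with $\tfrac1{12}P_3$, and the parity of the Atkin-Garvan moment $N_2$. For the first of these I would note that squaring $a(q)=\sum_{n,m\in\mathbb Z}q^{n^2+nm+m^2}$ gives $a^2(q)=\sum_{k\ge0}R(k)q^k$ with $R(k)$ as in the statement, and that $R(0)=1$ because the quaternary form is positive definite, so that $a^2(q)-1=\sum_{k\ge1}R(k)q^k$; also $1/(q^3;q^3)_\infty=\sum_{j\ge0}p_3(j)q^j$, since a partition with all parts divisible by $3$ is just $3$ times an arbitrary partition. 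Hence the defining relation for $\xi$ in Theorem 2 reads
\[
\sum_{m\ge1}\xi(m)q^m=\frac{a^2(q)-1}{12\,(q^3;q^3)_\infty}=\frac1{12}\Bigl(\sum_{k\ge1}R(k)q^k\Bigr)\Bigl(\sum_{j\ge0}p_3(j)q^j\Bigr),
\]
and comparing coefficients of $q^n$ gives $\xi(n)=\tfrac1{12}\sum_k R(k)p_3(n-k)=\tfrac1{12}P_3(n)$ for every $n\ge1$; here the $k=0$ term of the convolution is exactly the ``$-1$'' that was removed, and in any event vanishes when $3\nmid n$ since then $p_3(n)=0$.

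With this in hand the second assertion is immediate: if $n\equiv\pm1\pmod3$ then Theorem 2 gives $spt_{2,3}(n)=\xi(n)=\tfrac1{12}P_3(n)$. For the first assertion I would apply the second part of Theorem 2 with $3n$ in place of its variable, namely $spt_{2,3}(3n)=3\,spt(n)+\xi(3n)+N_2(n)$, and reduce modulo $3$. The term $3\,spt(n)$ vanishes. For the term $N_2(n)$ I would use that $N_2(n)=\sum_{m\in\mathbb Z}m^2N(m,n)$ is even: by the symmetry $N(m,n)=N(-m,n)$ of the rank the contributions of $\pm m$ agree and the $m=0$ term is zero, so $N_2(n)+\tfrac12N_2(n)=\tfrac32N_2(n)=3\cdot\tfrac12N_2(n)\equiv0\pmod3$, i.e.\ $N_2(n)\equiv-\tfrac12N_2(n)\pmod3$. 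Substituting this and $\xi(3n)=\tfrac1{12}P_3(3n)$ gives
\[
spt_{2,3}(3n)\equiv\xi(3n)+N_2(n)\equiv\frac1{12}P_3(3n)-\frac12N_2(n)\pmod3,
\]
as claimed.

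The argument is short, so I do not expect a genuine obstacle; the care lies in bookkeeping rather than in ideas. The two things to get exactly right are: (i) matching the ``$-1$'' of $a^2(q)-1$ with the constant term $R(0)=1$ of $a^2(q)$, so that $12\,\xi(n)$ and $P_3(n)$ coincide cleanly with no leftover $p_3$-term; and (ii) performing the rescaling $q\mapsto q^3$, $n\mapsto 3n$ consistently between Theorems 2 and 3. The only input with any real content beyond Theorem 2 is the parity $2\mid N_2(n)$, which is precisely what permits $N_2(n)$ to be replaced by $-\tfrac12N_2(n)$ modulo $3$.
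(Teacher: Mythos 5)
Your argument is correct, but it takes a genuinely different route from the paper. The paper never passes through Theorem 2 here: it invokes [10, Theorem 13] (Lomadze's evaluation $R(n)=12\sigma(n)-36\sigma(n/3)$) to rewrite $\frac{1}{(q^3;q^3)_{\infty}}\sum_{n\ge1}\sigma(n)q^n$ as $\frac{1}{12}\sum_{n\ge1}P_3(n)q^n+3\sum_{n\ge1}np(n)q^{3n}$ (the last step via Euler's identity $np(n)=\sum_{k}p(k)\sigma(n-k)$), and then reads both assertions off the exact identity of Theorem 1 by extracting the coefficients of $q^{3n}$ and of $q^{3n\pm1}$, the term $3np(n)$ dying modulo $3$. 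You instead deduce everything from Theorem 2: the observation $a^2(q)=\sum_{k\ge0}R(k)q^k$ with $R(0)=1$ gives $\xi(n)=\frac{1}{12}\sum_{k\ge1}R(k)p_3(n-k)=\frac{1}{12}P_3(n)$ with no divisor-sum evaluation of $R$ at all, and you convert the $+N_2(n)$ coming from Theorem 2 into $-\frac{1}{2}N_2(n)$ modulo $3$ by using the evenness of $N_2(n)$ (from the rank symmetry $N(m,n)=N(-m,n)$), a small extra ingredient the paper does not need since its exact identity already carries $-\frac{1}{2}N_2(n)$. The two derivations are consistent: your exact relation $spt_{2,3}(3n)=3\,spt(n)+\frac{1}{12}P_3(3n)+N_2(n)$ matches the paper's $spt_{2,3}(3n)=\frac{1}{12}P_3(3n)+3np(n)-\frac{1}{2}N_2(n)$ via $spt(n)=np(n)-\frac{1}{2}N_2(n)$; and of course the arithmetic content you avoid resurfaces inside Theorem 2, whose proof rests on equation (11), which the paper notes is essentially equivalent to Lomadze's theorem, so yours is a repackaging rather than a logically weaker set of inputs. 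One bookkeeping point you handled correctly but should state unambiguously: $P_3(n)$ must be the convolution over $k\ge1$ (as in the paper's proof, where it arises from $\sum_{n\ge1}R(n)q^n$), since including the $k=0$ term would add $\frac{1}{12}p(n)$ when $3\mid n$ and spoil the first congruence; your remark that the subtracted ``$-1$'' is precisely the $k=0$ term $R(0)q^0$ is exactly what makes this come out right.
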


\begin{proof} Using [10, Theorem 13], it can be seen that 
\begin{align} \frac{1}{(q^3;q^3)_{\infty}}\sum_{n\ge1}\frac{nq^n}{1-q^n}\\
&=\frac{1}{(q^3;q^3)_{\infty}}\sum_{n\ge1}\sigma(n)q^n \\
&=\frac{1}{(q^3;q^3)_{\infty}}\sum_{n\ge1}\left(\frac{1}{12}R(n)+3\sigma(n/3)\right)q^n\\
&=\frac{1}{(q^3;q^3)_{\infty}}\left(\frac{1}{12}\sum_{n\ge1}R(n)q^n+3\sum_{n\ge1}\frac{nq^{3n}}{1-q^{3n}}\right)\\
&=\frac{1}{12}\sum_{n\ge1}P_3(n)q^n+3\sum_{n\ge1}np(n)q^{3n},
\end{align}
since $\sigma(n/3)=0$ unless $n\equiv0\pmod{3}$ by definition. Line (17) follows from Euer's well-known identity $np(n)=\sum_{k\le n}p(k)\sigma(n-k).$ This observation coupled with equation (3), and equating coefficients of $q^{3n}$ and then $q^{3n\pm1}$ gives the result. \end{proof}
We note that one might also write $$P_3(3n)=\sum_{k}R(3k)p(n-k),$$ upon noting that the coefficient of $q^{3n}$ of $\sum_{n\ge0}P_3(n)q^n$ is
the coefficient of $q^{3n}$ in $$\sum_{m,k\ge0}p(m)R(3k)q^{3(m+k)}.$$
\section{A Relation to Andrews' $spt$ function}

In this section we offer a nice consequence of Theorem 1 and some concluding remarks. 
\begin{theorem}We have, $spt_{2,3}(3n)\equiv spt(n)\pmod{3}.$\end{theorem}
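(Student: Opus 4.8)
The plan is to read off the coefficient of $q^{3n}$ from the identity of Theorem~1 (equation (3)) and to match it against the coefficient of $q^{n}$ in Andrews' generating function (2).

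First I would rewrite the right-hand side of (3). Since $(q^{3};q^{3})_{\infty}^{-1}=\sum_{j\ge0}p(j)q^{3j}$ and $\sum_{n\ge1}nq^{n}/(1-q^{n})=\sum_{n\ge1}\sigma(n)q^{n}$, and since the left-hand side of (3) is the generating function for $spt_{2,3}(n)$, comparing the coefficients of $q^{3n}$ on the two sides of (3) gives
$$spt_{2,3}(3n)=\sum_{j=0}^{n}p(j)\,\sigma\!\left(3(n-j)\right)-\tfrac12 N_{2}(n),$$
because a monomial $q^{3j+m}$ is a power of $q^{3}$ exactly when $3\mid m$, and the $N_{2}$-series on the right of (3) is already supported on powers of $q^{3}$. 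On the other hand, (2) together with Euler's identity $np(n)=\sum_{k=0}^{n}p(k)\sigma(n-k)$ (the same identity invoked in the proof of Theorem~3) yields $spt(n)=\sum_{k=0}^{n}p(k)\sigma(n-k)-\tfrac12 N_{2}(n)$. Subtracting, the two $N_{2}$-contributions cancel \emph{exactly}, so that
$$spt_{2,3}(3n)-spt(n)=\sum_{j=0}^{n}p(j)\left(\sigma\!\left(3(n-j)\right)-\sigma(n-j)\right).$$

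It then suffices to prove the elementary congruence $\sigma(3m)\equiv\sigma(m)\pmod 3$ for every $m\ge0$. For $m=0$ this reads $0\equiv0$; for $m\ge1$ write $m=3^{a}t$ with $\gcd(t,3)=1$, so that multiplicativity of $\sigma$ gives $\sigma(m)=\sigma(3^{a})\sigma(t)$ and $\sigma(3m)=\sigma(3^{a+1})\sigma(t)$, while $\sigma(3^{b})=1+3+\cdots+3^{b}\equiv1\pmod 3$ for all $b\ge0$; hence $\sigma(3m)\equiv\sigma(t)\equiv\sigma(m)\pmod 3$. With this every term of the last display is divisible by $3$, and $spt_{2,3}(3n)\equiv spt(n)\pmod 3$ follows.

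I do not expect a genuinely hard step: the only places needing care are the bookkeeping in the coefficient extraction from (3) --- in particular checking that the half-integer $N_{2}$-terms on the two sides of the comparison cancel precisely, leaving the clean $p$-$\sigma$ convolution above --- and the elementary fact $\sigma(3m)\equiv\sigma(m)\pmod 3$. One could instead reach the same convolution by feeding Theorem~3's congruence $spt_{2,3}(3n)\equiv\frac{1}{12}P_{3}(3n)-\frac12 N_{2}(n)\pmod 3$ into the relation $\frac{1}{12}P_{3}(3n)=\sum_{j}p(j)\sigma\!\left(3(n-j)\right)-3\,np(n)$, which is the coefficient of $q^{3n}$ in the chain of equalities running from (13) to (17).
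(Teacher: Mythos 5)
Your proposal is correct and follows essentially the same route as the paper: extract the coefficient of $q^{3n}$ from Theorem 1 to get $spt_{2,3}(3n)=\sum_k p(k)\sigma(3(n-k))-\tfrac12 N_2(n)$, then use $\sigma(3m)\equiv\sigma(m)\pmod 3$ together with Euler's identity and Andrews' $spt(n)=np(n)-\tfrac12 N_2(n)$. The only cosmetic differences are that you justify $\sigma(3m)\equiv\sigma(m)\pmod 3$ by multiplicativity rather than via $\sigma(3n)=4\sigma(n)-3\sigma(n/3)$, and you cancel the two $\tfrac12 N_2(n)$ terms by subtraction rather than carrying them through the congruence chain.
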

\begin{proof} By Theorem 1, we have
\begin{equation}\sum_{n\ge1}spt_{2,3}(n)q^n=\frac{1}{(q^3;q^3)_{\infty}}\sum_{n\ge1}\sigma(n)q^n-\frac{1}{2}\sum_{n\ge1}N_2(n)q^{3n}.\end{equation}
Note that $\sigma(3n)=4\sigma(n)-3\sigma(n/3),$ and hence $\sigma(3n)\equiv4\sigma(n)\pmod{3},$ and therefore $\sigma(3n)\equiv\sigma(n)\pmod{3}$ (by the triviality $a\equiv 4b\pmod{3}$ iff $a\equiv b\pmod{3}$). With this in mind, we see that the coefficient of $q^{3n}$ in (18) is
\begin{equation}spt_{2,3}(3n)=\sum_{k}p(k)\sigma(3(n-k))-\frac{1}{2}N_2(n).\end{equation}
This follows from the observation that the coefficient of $q^{3n}$ in
$$\sum_{k\ge1}p(k)q^{3k}\sum_{m\ge1}\sigma(m)q^{m},$$
is the coefficient of $q^{3n}$ in
$$\sum_{m,k\ge1}p(k)\sigma(3m)q^{3(m+k)}.$$ Now using $\sigma(3n)\equiv\sigma(n)\pmod{3}$ with equation (19), we find
\begin{align} spt_{2,3}(3n)\\
&\equiv \sum_{k}p(k)\sigma(n-k)-\frac{1}{2}N_2(n)\pmod{3} \\
&\equiv np(n)-\frac{1}{2}N_2(n)\pmod{3}\\
&\equiv spt(n)\pmod{3}.
\end{align}
\end{proof}
For a complete multiplicative theory of Andrews' $spt$ function modulo $3$ see [7]. It would be nice to see a similar theory built for $spt_{2,3}(n)$ using 
Theorem 4. \par We note it is natural to consider re-studying Theorem 2 (or $\xi(n)$) further by considering further identities for the $q$-series
$$\sum_{\substack{n,m,i,j\in\mathbb{Z}}}q^{n^2+m^2+i^2+j^2+ij+nm}=a^2(q).$$ In particular, from [6, pg.37, eq.(2.1)] and [6, pg.37, Proposition 2.2] we have the known identity
\begin{equation} a(q)=9q\frac{(q^9;q^9)_{\infty}^3}{(q^3;q^3)_{\infty}}+\frac{(q;q)_{\infty}^3}{(q^3;q^3)_{\infty}}.\end{equation}

Hence,
$$ \sum_{n\ge1}spt_{2,3}(n)q^n-3\sum_{n\ge1}spt(n)q^{3n}$$
\begin{equation}=\frac{1}{(q^3;q^3)_{\infty}}\left(\frac{27}{4}q^2\frac{(q^9;q^9)_{\infty}^6}{(q^3;q^3)_{\infty}^2}+\frac{3q}{2}\frac{(q;q)_{\infty}^3(q^9;q^9)_{\infty}^3}{(q^3;q^3)_{\infty}^2}+\frac{1}{12}\frac{(q;q)_{\infty}^6}{(q^3;q^3)_{\infty}^2}-\frac{1}{12}\right)+\sum_{n\ge1}N_2(n)q^{3n} \end{equation}

\begin{align}\equiv \frac{1}{12}\frac{(q;q)_{\infty}^6}{(q^3;q^3)_{\infty}^3}-\frac{1}{12(q^3;q^3)_{\infty}}+\sum_{n\ge1}N_2(n)q^{3n} \pmod{3}
\end{align}
Therefore,
\begin{equation} \sum_{n\ge1}spt_{2,3}(n)q^n\equiv \frac{1}{12}\frac{(q;q)_{\infty}^6}{(q^3;q^3)_{\infty}^3}-\frac{1}{12(q^3;q^3)_{\infty}}+\sum_{n\ge1}N_2(n)q^{3n} \pmod{3}.\end{equation}
On the other hand, we may write
\begin{equation}\frac{1}{12}\frac{(q;q)_{\infty}^6}{(q^3;q^3)_{\infty}^3}=\frac{1}{12}\left(\sum_{n\ge0}(-1)^n(2n+1)q^{n(n+1)/2}\right)^2\left(\sum_{n\ge0}p(n)q^{3n}\right)^3.\end{equation}
Since we may write 
$$\sum_{n\ge0}f_nq^{3n}=\left(\sum_{n\ge0}p(n)q^{3n}\right)^3,$$ where $f_n$ is a convolution sum involving $p(n),$ we are concerned primarily when the sum of two triangular numbers is $\equiv2\pmod{3}.$ If we write $T_i=i(i+1)/2,$ $i,j\in\mathbb{N},$ then we have that $T_i+T_j\equiv2\pmod{3}$ only when both $i\equiv1\pmod{3},$ $j\equiv1\pmod{3}.$ When this occurs we see that $(2i+1)(2j+1)$ is of the form $9(2i'+1)(2j'+1),$ $i',j'\in\mathbb{N}.$ Therefore, taking the coefficient of $q^{3n+2}$ in (27) now gives us the following result.
\begin{theorem} $spt_{2,3}(3n+2)\equiv 0\pmod{3}.$  \end{theorem}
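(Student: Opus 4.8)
The plan is to work from the congruence already established in equation (27), namely
$$\sum_{n\ge1}spt_{2,3}(n)q^n\equiv \frac{1}{12}\frac{(q;q)_{\infty}^6}{(q^3;q^3)_{\infty}^3}-\frac{1}{12(q^3;q^3)_{\infty}}+\sum_{n\ge1}N_2(n)q^{3n}\pmod{3},$$
and extract the coefficient of $q^{3n+2}$ from both sides. On the right, the second term $-\tfrac{1}{12}(q^3;q^3)_{\infty}^{-1}$ and the third term $\sum N_2(n)q^{3n}$ are both supported only on exponents divisible by $3$, so they contribute nothing to the coefficient of $q^{3n+2}$. Thus everything reduces to showing that the coefficient of $q^{3n+2}$ in $\tfrac{1}{12}(q;q)_{\infty}^6(q^3;q^3)_{\infty}^{-3}$ vanishes modulo $3$ — in fact it will vanish as an honest rational number after the factor $\tfrac{1}{12}$ is cleared, which is even stronger.

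Next I would use the factorization in equation (28),
$$\frac{1}{12}\frac{(q;q)_{\infty}^6}{(q^3;q^3)_{\infty}^3}=\frac{1}{12}\left(\sum_{i\ge0}(-1)^i(2i+1)q^{T_i}\right)^2\left(\sum_{n\ge0}p(n)q^{3n}\right)^3,$$
where $T_i=i(i+1)/2$, invoking Jacobi's identity $(q;q)_{\infty}^3=\sum_{i\ge0}(-1)^i(2i+1)q^{T_i}$. The cube of the partition generating function is a power series in $q^3$, say $\sum_{k\ge0}f_kq^{3k}$; it only shifts exponents by multiples of $3$ and hence cannot change a residue class mod $3$. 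Therefore the coefficient of $q^{3n+2}$ in the product is a sum of terms coming from the coefficient of $q^{3m+2}$ in the squared theta series $\left(\sum_i(-1)^i(2i+1)q^{T_i}\right)^2$. That squared series has $q$-exponents of the form $T_i+T_j$, so I must determine for which pairs $(i,j)$ one has $T_i+T_j\equiv 2\pmod 3$.

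The arithmetic core — and the one genuinely delicate point — is the observation that $T_i\pmod 3$ cycles as $T_0\equiv 0$, $T_1\equiv 1$, $T_2\equiv 0$ with period $3$, so $T_i\equiv 1\pmod 3$ forces $i\equiv 1\pmod 3$, and $T_i+T_j\equiv 2\pmod 3$ happens exactly when $T_i\equiv T_j\equiv 1$, i.e. $i\equiv j\equiv 1\pmod 3$. Writing $i=3i'+1$, $j=3j'+1$, the corresponding coefficient weight is $(-1)^i(2i+1)\cdot(-1)^j(2j+1)=(-1)^{i+j}(6i'+3)(6j'+3)=9\,(-1)^{i+j}(2i'+1)(2j'+1)$, which is divisible by $9$. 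Hence every monomial contributing to $q^{3n+2}$ in $(q;q)_{\infty}^6(q^3;q^3)_{\infty}^{-3}$ carries a coefficient divisible by $9$; dividing by $12$ still leaves something divisible by $3$ (indeed by $3/4$, but modulo $3$ this is plainly $0$). Combining this with the vanishing of the other two right-hand terms on exponents $\equiv 2\pmod 3$ yields $spt_{2,3}(3n+2)\equiv 0\pmod 3$, completing the proof. The main obstacle is purely bookkeeping: one must be careful that the factor $\tfrac{1}{12}$ does not destroy the $3$-integrality, which is why it matters that the theta-square coefficients are divisible by $9$ and not merely by $3$.
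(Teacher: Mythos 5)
Your proposal is correct and follows essentially the same route as the paper: extract the coefficient of $q^{3n+2}$ from the congruence (27), use the factorization (28) with Jacobi's identity, observe that $(\sum p(n)q^{3n})^3$ only shifts exponents by multiples of $3$, and that $T_i+T_j\equiv 2\pmod 3$ forces $i\equiv j\equiv 1\pmod 3$, making the weights $(2i+1)(2j+1)$ divisible by $9$. Your explicit remarks about the vanishing of the other two terms on exponents $\equiv 2\pmod 3$ and about the harmlessness of the factor $\tfrac{1}{12}$ (since $4$ is invertible modulo $3$) only make the paper's argument more precise.
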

To see some examples numerically (recall the restriction on parts that parts are $<$ twice the smallest or multiples of three $\ge$ thrice the smallest), we have the following examples:
\newline
{\bf Example 1:} $spt_{2,3}(5)=9\equiv 0\pmod{3}.$ Since we are to count the number of appearances of the smallest parts in the partitions $(5),$ $(3,2),$ $(3,1,1),$ $(1,1,1,1,1).$
\newline
{\bf Example 2:} $spt_{2,3}(8)=27\equiv 0\pmod{3}.$ Since we are to count the number of appearances of the smallest parts in the partitions $(8),$ $(6,2),$ $(6,1,1),$ $(5,3),$ $(4,4),$ $(3,3,2),$ $(3,3,1,1),$ $(3,1,1,1,1,1),$ $(2,2,2,2),$ $(1,1,1,1,1,1,1,1).$

1390 Bumps River Rd. \\*
Centerville, MA
02632 \\*
USA \\*
E-mail: alexpatk@hotmail.com
\end{document}